\numberwithin{equation}{section}
\newtheorem{theorem}{Theorem}[section]
\newtheorem{proposition}{Proposition}[section]
\newtheorem{lemma}{Lemma}[section]
\newcommand{\ov}[1]{\overline{#1}}
\newcommand{\ve}{\varepsilon}
\theoremstyle{definition}
\theoremstyle{remark}
\begin{document}
\bibliographystyle{amsplain}

\author[A. Chau]{Albert Chau}
\address{Department of Mathematics, The University of British Columbia, 1984 Mathematics Road, Vancouver, B.C.,  Canada V6T 1Z2.  Email: chau@math.ubc.ca. } 
\author[B. Weinkove]{Ben Weinkove}
\address{Department of Mathematics, Northwestern University, 2033 Sheridan Road, Evanston, IL 60208, USA.  Email: weinkove@math.northwestern.edu.}

\title{The Stefan problem and concavity}

\maketitle

\vspace{-20pt}

\begin{abstract}
We construct examples for the one-phase Stefan problem which show that $\alpha$-concavity of the solution is in general not preserved in time, for $0 \le \alpha <1/2$.  In particular, this shows that, in contrast to the case of the heat equation for a fixed convex domain, log concavity is not preserved for solutions of the Stefan problem.
\end{abstract}

\section{Introduction}
 
 The one-phase Stefan problem is a free boundary problem used to model phase transitions in matter where the phase boundary moves with time.  The initial data is given by a domain $\Omega_0$ in $\mathbb{R}^n$ and a function $u_0$ defined on its closure $\ov{\Omega}_0$ which vanishes on the boundary $\partial \Omega_0$ and is positive in the interior $\Omega_0$.  For positive time $t$, the solution of the Stefan problem is a family of domains $\Omega_t$ and a function $u$, positive on $\Omega_t$, such that 
\begin{equation}\label{laplace}
\begin{split}
u_t &= \Delta u \,\,\,\text{in}\,\,\, \Omega_t\\
u&=0  \,\,\,\text{on}\,\,\, \partial \Omega_t, 
\end{split}
\end{equation}
and that the \emph{Stefan boundary condition} holds, which states that $\partial \Omega_t$ moves in the direction of the outward normal with speed $|\nabla u|$.  This can be stated as follows:  if $X(t)$ is a  path in $\partial \Omega_t \subset \mathbb{R}^n$ whose derivative is normal to $\partial \Omega_t$ at $X(t)$ then
\begin{equation} \label{sbc}
\dot{X}(t) = - \nabla u(X(t), t).
\end{equation}
The study of the Stefan problem has a long history, and we refer the reader to \cite{C, CF, F1, FK, H, Kim, K, KN,  M, O, R1, R2} and the references therein for the basic existence and uniqueness results.

In a classic paper, Brascamp-Lieb \cite{BL} showed that log concavity is preserved along the heat equation on convex domains (see also \cite{Bo, CW1, CW2, CMS, DK, IS1, IS2, IS3} for some related results).   Daskalopoulos-Hamilton-Lee \cite{DHL} showed that root concavity ($1/2$-concavity in the terminology below) is preserved for the porous medium equation, a degenerate diffusion equation.  It is natural then to ask what if any concavity conditions are preserved for the Stefan problem \cite{DL}.  

Our main result is a negative one.  We show that $\alpha$-concavity of the initial data, for $\alpha \in [0,1/2)$, is \emph{not} in general preserved in time for the Stefan problem.  Log concavity corresponds to $\alpha=0$ and so our result implies in particular that log concavity is not preserved.

More precisely, we will now define what we mean by a solution of the Stefan problem in the special case that it is of interest to us.  Fix $k \ge 2$. 
 Let  $\Omega_0$ be a bounded domain  and let 
 $u_0$ be a function in $C^k(\ov{\Omega}_0)$ which vanishes on $\partial \Omega_0$, is strictly positive on $\Omega_0$ and whose derivative $\nabla u_0$ is nowhere vanishing on $\partial \Omega_0$.   We define a nondegenerate $C^k$ solution of the (one-phase) Stefan problem starting with the initial data $(\Omega_0, u_0)$  on the time interval $[0,T]$ to be a relatively open set $\Omega \subset \mathbb{R}^n \times [0,T]$  and a function $u \in C^k(\ov{\Omega})$ satisfying the following conditions.  For each $t \in [0,T]$, the set $\Omega_t := \Omega \cap (\mathbb{R}^n \times \{t \})$ is a bounded domain in $\mathbb{R}^n \times \{t \} \cong \mathbb{R}^n$ with $t=0$ corresponding to the initial domain $\Omega_0$.  
 The function $u|_{\ov{\Omega}_0}$ is equal to the initial data $u_0$.  The function $u$ is positive on $\Omega$ and vanishes on $\partial \Omega$, and $u$ satisfies \eqref{laplace} and (\ref{sbc}).  Moreover, $u$ satisfies the nondegeneracy condition that its spatial gradient $\nabla u$ does not vanish anywhere on $\partial \Omega$.
  
 Some remarks are in order:

\begin{enumerate} 
\item In the above we use the parabolic definition of $C^k$, so that $u \in C^k$ means that $u$ has $k$ derivatives in the spatial directions and $k/2$ derivatives in the time direction. 
 \item When we refer to  the boundary $\partial \Omega = \ov{\Omega} \setminus \Omega$, we are using the subspace topology on $\mathbb{R}^n \times [0,T]$.  In particular $\Omega$ includes $\Omega_0$ and $\Omega_T$ while $\partial \Omega$ does not.  By the nondegeneracy condition on $u$, the boundary $\partial \Omega$ inherits regularity from $u$.

\item  Our definition of a solution to the Stefan problem is rather restrictive since it insists that  $u$ be $C^k$ at time $t=0$ in both space and time directions, up to the boundary.  This imposes compatibility conditions on $u_0$ which are described in Section \ref{prelim} below.
\end{enumerate}

We now define $\alpha$-concavity in our setting.  Let $W\subset \mathbb{R}^n$ be a bounded domain and let $v \in C^2(\ov{W})$ be positive on $W$ and vanishing on $\partial W$. 
For $\alpha>0$, we say that $v$ is  $\alpha$-concave  on $W$ if $D^2v^{\alpha} \le 0$ on $W$.  We say that $v$ is $0$-concave if $D^2 \log v \le 0$ on $W$.  Equivalently, $\alpha$-concavity corresponds to the nonpositivity of the matrix with $(i,j)$th entry
 $$(v D_iD_j v - (1-\alpha) D_iv D_j v)$$
 on $W$.  
 
Our main theorem gives a family of examples in $\mathbb{R}^2$ for the Stefan problem which break the $\alpha$-concavity for positive time.

\begin{theorem} \label{maintheorem}
Given $k \ge 2$ and $\alpha \in [0,1/2)$, there exist a bounded convex set $\Omega_0 \subset \mathbb{R}^2$ with smooth boundary and $u_0 \in C^{\infty}(\ov{\Omega}_0)$ which is strictly positive on $\Omega_0$ and vanishes on $\partial \Omega_0$ with the following properties:
\begin{enumerate}
\item[(i)] $u_0$ is $\alpha$-concave on $\Omega_0$.
\item[(ii)] $\nabla u_0$ does not vanish at any point of $\partial \Omega_0$.
\item[(iii)] There exists $T>0$ and a unique nondegenerate $C^k$ solution $(\Omega,u)$   of the Stefan problem starting at the initial data $(\Omega_0, u_0)$ on the time interval $[0,T]$ such that: 
$$\Omega_t \textrm{ is not convex for any $t \in (0,T]$}$$  and $$u|_{\Omega_t} \textrm{ is not $\alpha$-concave for any $t\in (0,T]$}.$$
\end{enumerate}
\end{theorem}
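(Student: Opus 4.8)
\section*{Proof proposal}

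The plan is to reduce the whole statement to a short-time analysis of the free boundary near a \emph{flat} portion of $\p\Omega_0$, and then to build initial data that forces the boundary to dent inward there. Assume $\Omega_0$ is convex with smooth boundary and that $\p\Omega_0$ contains a line segment $S$, which after a rigid motion we place on $\{y=0\}$ with $\Omega_0$ locally equal to $\{y>0\}$; put $g(x):=u_{0,y}(x,0)=|\nabla u_0|$ on $S$, so the nondegeneracy hypothesis is $g>0$ on $S$. For small $t$, parametrize $\p\Omega_t$ near the point tracking an interior point $x_0$ of $S$ as a graph $y=f(x,t)$ with $f(\cdot,0)\equiv 0$ near $x_0$. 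Since $f_x(\cdot,0)=f_{xx}(\cdot,0)=0$ there, the Stefan condition \eqref{sbc} gives $f_t(x,0)=-g(x)$ and, differentiating twice in $x$, $f_{xxt}(x,0)=-g''(x)$, hence
$$ f_{xx}(x_0,t)=-t\,g''(x_0)+O(t^2). $$
So if $g''(x_0)>0$ then $f_{xx}(x_0,t)<0$ for all small $t>0$, i.e. $\Omega_t$ is non-convex. Moreover, in boundary-adapted coordinates one gets from $u(x,f(x,t),t)=0$ that $u_{xx}=-g\,f_{xx}$ at that point; evaluating the matrix $\big(u D_iD_j u-(1-\al)D_iu\,D_ju\big)$ at an interior point of $\Omega_t$ at height $\varepsilon$ above it produces a $(1,1)$-entry equal to $-g^2 f_{xx}(x_0,t)\,\varepsilon+O(\varepsilon^2)>0$, so $u|_{\Omega_t}$ is not $\al$-concave either. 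Thus Theorem \ref{maintheorem} follows once we exhibit $(\Omega_0,u_0)$ satisfying (i)--(ii) and the compatibility conditions of Section \ref{prelim}, with $g''(x_0)>0$ at some interior point $x_0$ of $S$, and invoke the short-time existence and uniqueness of the nondegenerate $C^k$ solution.

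The remaining issue is that $\al$-concavity of $u_0$ itself constrains $g$. Differentiating $u=0$ along the moving boundary and using $u_t=\Delta u$ shows that the first compatibility condition is $\Delta u_0=|\nabla u_0|^2$ on $\p\Omega_0$, which on $S$ reads $u_{0,yy}(x,0)=g(x)^2$; the higher compatibility conditions likewise prescribe successive normal derivatives of $u_0$ on $\p\Omega_0$ and do not affect the computation below. Expanding $\big(u_0 D_iD_j u_0-(1-\al)D_iu_0\,D_ju_0\big)$ at a point of $\Omega_0$ at height $\varepsilon$ above a point $x\in S$, the $(1,1)$- and $(2,2)$-entries are $\varepsilon^2\big(g g''-(1-\al)(g')^2\big)+O(\varepsilon^3)$ and $-(1-\al)g^2+O(\varepsilon)$, and the determinant is $-\varepsilon^2 g^2\big((1-\al)g g''-(1-2\al)(g')^2\big)+O(\varepsilon^3)$; so, to leading order, $\al$-concavity near $S$ amounts to
$$ g\,g''\ \le\ \frac{1-2\al}{1-\al}\,(g')^2 \qquad\text{on } S. $$
For $\al\in[0,1/2)$ the right-hand side is strictly positive wherever $g'\neq 0$, which leaves room for $g''>0$; concretely, on a short interval $S$ around a suitable $x_0\neq 0$ the Gaussian $g(x)=e^{-x^2}$ satisfies $g'\neq 0$, $g''(x_0)>0$ and $g g''-\tfrac{1-2\al}{1-\al}(g')^2<0$ on $S$. (At $\al=1/2$ the inequality forces $g''\le 0$, matching the known preservation of $1/2$-concavity.)

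It then remains to globalize. I would take $\Omega_0$ to be a convex domain with smooth boundary containing $S$ on a flat side --- for instance a smoothed stadium, with $S$ lying well inside the flat part --- and prescribe the full $C^k$ jet of $u_0$ along $\p\Omega_0$: $u_0=0$ and normal derivative $g$ on the flat part, continued smoothly and positively over the rest of $\p\Omega_0$, which is strictly convex there so that the corresponding leading-order $\al$-concavity inequality holds automatically, with the remaining normal derivatives fixed by the compatibility conditions. Since the inequalities above are \emph{strict}, this jet extends to a function that is $\al$-concave in a neighborhood of $\p\Omega_0$; I then extend to the interior by declaring $u_0^{\al}$ (or $\log u_0$ when $\al=0$) to be a mollified minimum of this near-boundary function and a fixed strictly concave function that dominates and is positive in the interior. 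A minimum of concave functions is concave, and strict concavity of one factor is preserved under mollification, so the resulting $u_0$ is $\al$-concave on $\Omega_0$, smooth, positive on $\Omega_0$, zero on $\p\Omega_0$, nondegenerate on $\p\Omega_0$, and has $g''(x_0)>0$. Feeding $(\Omega_0,u_0)$ into the short-time existence/uniqueness theory for nondegenerate $C^k$ Stefan solutions and applying the first paragraph finishes the proof.

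The main obstacle is precisely this globalization: producing a single $u_0$ that is $\al$-concave on all of $\Omega_0$, respects every compatibility condition on $\p\Omega_0$, is nondegenerate on $\p\Omega_0$, and has $g''(x_0)>0$ on $S$. The conflict is between global $\al$-concavity and $g''(x_0)>0$, and the reason it can be resolved exactly for $\al\in[0,1/2)$ is the elementary fact that the bound $\tfrac{1-2\al}{1-\al}(g')^2$ that $\al$-concavity places on $g g''$ is positive only in that range. A secondary technical point is justifying the short-time boundary expansions, and the precise form of the compatibility conditions, rigorously up to $t=0$ and up to $\p\Omega$, which is where the assumed $C^k$ regularity of the solution enters.
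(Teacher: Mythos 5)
Your proposal is correct and follows essentially the same route as the paper: a convex domain with a flat boundary segment, initial data whose normal derivative $g$ along that segment satisfies $g''>0$ at a point while obeying the leading-order $\alpha$-concavity constraint $g\,g''\le\tfrac{1-2\alpha}{1-\alpha}(g')^2$ (possible exactly for $\alpha<1/2$), a Ghomi-type concave extension to the interior, Hanzawa's short-time existence, and the first-order Taylor expansion of the free boundary in $t$ to produce the dent. The only cosmetic difference is that you exhibit $g$ as an explicit Gaussian, whereas the paper produces it as the solution of the ODE $\psi\psi''-\tfrac23(1-2\alpha)(\psi')^2=-1$.
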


Our result in particular implies that additional assumptions are needed for Theorem 1.1 of \cite{DL}.
 
The outline of the paper is as follows.  In Section \ref{prelim} we give an overview of the compatibility conditions required for short time existence of $C^k$ solutions to the Stefan problem (in the sense described above).  We also give some elementary results about concave functions, including their short proofs. Section \ref{sectionconstruct} is the main part of the paper, which gives the construction of $\Omega_0$ and $u_0$.  The proof of Theorem \ref{maintheorem} is completed in Section \ref{sectionlast}.

\section{Preliminaries} \label{prelim}

\subsection{Short time existence result and compatibility conditions}  The Stefan boundary condition (\ref{sbc}) together with \eqref{laplace} induces compatibility conditions  for $u_0$ on the boundary $\partial \Omega_t$.   These arise from differentiating with respect to $t$ the equation
\begin{equation}\label{comp0}
u(X(t), t)=0
\end{equation}
along a path $X(t) \in \partial \Omega_t$ whose derivative is normal to the boundary, while using  \eqref{sbc} and \eqref{laplace} to replace time derivatives with spatial derivatives.   We illistrate this by deriving the first two compatibility conditions in detail as follows.

 Differentiating \eqref{comp0} once in time gives $u_t+\nabla u \cdot \dot{X} =0$, then using  \eqref{laplace} and  \eqref{sbc} gives the \emph{first compatibility condition}  for $u_0$:

 \begin{equation} \label{comp1}
 \Delta u_0 - |\nabla u_0|^2 =0  \,\,\,\text{on}\,\,\, \partial \Omega_0.
 \end{equation}
Differentiating (\ref{comp0}) once more in time gives $\Delta u_t+ \nabla\Delta u \cdot \dot{X}  -2\nabla u_t \nabla u-2 u_i u_{ij} \dot{X}_j =0 $, and using  \eqref{laplace} and  \eqref{sbc} we obtain the  \emph{second compatibility condition}  for $u_0$:

 \begin{equation} \label{comp2}
 \Delta^2 u_0 -3\nabla\Delta u_0 \cdot \nabla u_0 +2 \sum_{i,j} (u_0)_{ij}(u_0)_i (u_0)_j =0  \,\,\,\text{on}\,\,\, \partial \Omega_0.
 \end{equation}

In general, we see that the \emph{$k$th compatibility condition} for the initial condition for $u_0$ can be written as

 \begin{equation} \label{compk}
 \Delta^k (u_0) +  F_k (u_0)=0  \,\,\,\text{on}\,\,\, \partial \Omega_0,
 \end{equation}
where $F_k$ is  a differential operator of degree at most $2k-1$ and is obtained as above, namely by differentating \eqref{comp0} $k$ times in $t$, then using  \eqref{laplace} and  \eqref{sbc} to replace time derivatives of $u_0$ with spatial derivatives.

A result of Hanzawa \cite{H} (see also \cite{M}) states that solutions to the Stefan problem exist on a small time interval $[0,T]$ as long as $u_0$ satisfies compatibility conditions.  For our purposes we may assume that we are given the initial data of a smooth function $u_0 \in C^{\infty}(\ov{\Omega}_0)$ where $\Omega_0$ is a bounded domain whose boundary $\partial \Omega_0$ is  smooth and has only one component.  The function $u_0$ is strictly positive on $\Omega_0$, vanishes on  $\partial \Omega_0$ and its derivative $\nabla u_0$ is nowhere vanishing on $\partial \Omega_0$.  In this setting, Hanzawa's result can be stated as follows.

\begin{theorem} \label{thmste}
Fix $k \ge 2$.  Then there exists $N=N(k)$ such that if $u_0$ satisfies the first $N$ compatibility conditions then there exists $T>0$ and a unique nondegenerate $C^k$ solution of the Stefan problem $(\Omega, u)$ on the time interval $[0,T]$ starting with this initial data.
\end{theorem}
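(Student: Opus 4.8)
The plan is to follow the classical strategy of Hanzawa: reduce the free boundary problem to a nonlinear parabolic system on the \emph{fixed} domain $\Omega_0$ by straightening the moving boundary, and then solve the resulting system by a contraction mapping argument in parabolic H\"older spaces, using the compatibility conditions precisely to obtain $C^k$ regularity up to the initial corner $\partial \Omega_0 \times \{0\}$.

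First I would introduce the unknown free boundary as a normal graph over $\partial \Omega_0$. Since $\nabla u_0$ is nowhere vanishing on the smooth boundary $\partial \Omega_0$, for small $t$ the boundary $\partial \Omega_t$ stays close to $\partial \Omega_0$ and can be written as $\{x + \rho(x,t)\nu(x) : x \in \partial \Omega_0\}$, where $\nu$ is the outward unit normal and $\rho$ is a scalar height function with $\rho(\cdot,0)=0$. Extending this by a fixed cutoff supported near $\partial \Omega_0$ produces, for each small $t$, a diffeomorphism $\Theta_t : \ov{\Omega}_0 \to \ov{\Omega}_t$ (the Hanzawa transformation) depending on $\rho$. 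Setting $v := u \circ \Theta_t$, the Dirichlet problem \eqref{laplace} becomes a quasilinear uniformly parabolic equation $v_t = a^{ij}(\rho, D\rho) D_i D_j v + b(\rho, D\rho, D^2\rho, Dv)$ on the fixed domain $\ov{\Omega}_0$ with $v = 0$ on $\partial \Omega_0$, while the Stefan condition \eqref{sbc} becomes an evolution equation $\partial_t \rho = \mathcal{N}(\rho, D_\nu v)$ on $\partial \Omega_0$, in which the normal velocity is governed by the boundary normal derivative of $v$. The pair $(v, \rho)$ with data $(u_0, 0)$ now lives on the time-independent domain, and the nondegeneracy $\nabla u_0 \neq 0$ on $\partial \Omega_0$ guarantees that $\mathcal{N}$ is a genuine nondegenerate driving term and that the coefficients $a^{ij}$ remain uniformly elliptic for small $\|\rho\|$.

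Next I would solve the coupled system by iteration. Freezing $\rho$ in a small ball of a parabolic H\"older space $C^{k+\al,(k+\al)/2}$ on $\partial \Omega_0 \times [0,T]$, I would solve the resulting \emph{linear} parabolic Dirichlet problem for $v$ by Schauder theory, then define the updated height via $\tilde{\rho}(x,t) = \int_0^t \mathcal{N}(\rho, D_\nu v)\, ds$, and show that the map $\rho \mapsto \tilde{\rho}$ is a contraction once $T$ is small. The single most delicate point, and the one where the hypothesis enters, is \emph{corner regularity}: to obtain a solution that is $C^k$ in the parabolic sense up to the initial slice $\partial \Omega_0 \times \{0\}$ one must match the data and the equation at that corner to the appropriate order. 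This is exactly what the compatibility conditions \eqref{comp1}, \eqref{comp2}, and more generally \eqref{compk} encode: differentiating $u(X(t),t)=0$ repeatedly and eliminating time derivatives via \eqref{laplace} and \eqref{sbc} forces relations of the form $\Delta^j u_0 + F_j(u_0) = 0$ on $\partial \Omega_0$ for each $j$, and these are precisely the relations under which the standard parabolic theory, in corner-compatible H\"older spaces, yields $v \in C^{k+\al,(k+\al)/2}$ with no loss of regularity at $t=0$. I would take $N = N(k)$ to be the number of such conditions required by this corner theory, which is of order $k/2$ since $C^k$ in the parabolic sense demands matching roughly $k/2$ time derivatives at the corner.

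Finally, the contraction map has a unique fixed point $\rho$ on a short interval $[0,T]$, which determines $\Omega_t = \Theta_t(\Omega_0)$ and $u = v \circ \Theta_t^{-1}$; unwinding the transformation shows that $(\Omega, u)$ is a nondegenerate $C^k$ solution with $u|_{\ov{\Omega}_0} = u_0$. Uniqueness follows by running the same estimates on the difference of two solutions: two nondegenerate $C^k$ solutions produce two height functions solving the same fixed point equation, and the contraction property (equivalently, a Gronwall argument on the difference) forces them to coincide for small $t$, so the solutions agree. The main obstacle throughout is the corner-compatibility bookkeeping, namely verifying that exactly $N(k)$ compatibility conditions suffice to propagate $C^k$ regularity up to $t=0$ through the nonlinear coupling, rather than the contraction estimate itself, which is routine once the linear Schauder theory and the Hanzawa reduction are in place.
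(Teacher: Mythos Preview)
Your sketch correctly outlines Hanzawa's argument: the normal-graph reduction to a fixed domain, the resulting quasilinear parabolic system for $(v,\rho)$, the contraction in parabolic H\"older spaces, and the role of the compatibility conditions \eqref{compk} in securing corner regularity at $\partial\Omega_0\times\{0\}$. However, the paper does not actually prove this theorem; it simply quotes it as a result of Hanzawa \cite{H} (with a mention of Meirmanov \cite{M}) and notes that the explicit value of $N(k)$ is given there. So while your approach is essentially the correct one and matches the cited reference, there is nothing in the paper's own argument to compare against---the authors treat Theorem~\ref{thmste} as a black box.
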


The constant $N=N(k)$ is given explicitly in \cite{H} and is not optimal, but here we are not concerned with the question of optimal regularity.

\subsection{Two elementary lemmas about concave functions}

In this section we recall two  known, elementary results which will be needed in the sequel.  Let $r$ denote the distance from the origin in $\mathbb{R}^2$.

\begin{lemma}\label{l0}
Let $q(r)$ be a $C^2$ radial function on $\mathbb{R}^2$.  Then $q$ is strongly concave on the set 
$$S=\{ (x,y) \in \mathbb{R}^2 \ | \  r\neq 0, \ q'(r)<0, \ \textrm{and }  q''(r)<0 \}.$$
\end{lemma}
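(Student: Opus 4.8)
The plan is to compute the Hessian of $q$ as a function on $\mathbb{R}^2$ and diagonalize it with respect to the natural orthonormal frame adapted to the polar coordinates. Writing $q = q(r)$ with $r = \sqrt{x^2+y^2}$, one has $D_i q = q'(r)\, x_i/r$, and differentiating again,
\begin{equation*}
D_i D_j q = q''(r) \frac{x_i x_j}{r^2} + q'(r) \left( \frac{\delta_{ij}}{r} - \frac{x_i x_j}{r^3} \right).
\end{equation*}
At any point with $r \neq 0$, this matrix has the radial unit vector $e_r = (x,y)/r$ as an eigenvector with eigenvalue $q''(r)$, and any vector orthogonal to $e_r$ (i.e. the angular direction $e_\theta$) as an eigenvector with eigenvalue $q'(r)/r$.

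Given this, the argument is immediate: on the set $S$ we have $r \neq 0$, $q''(r) < 0$, and $q'(r) < 0$, hence $q'(r)/r < 0$ as well (since $r > 0$). Both eigenvalues of $D^2 q$ are therefore strictly negative on $S$, so $D^2 q < 0$ there, which is precisely the statement that $q$ is strongly concave on $S$.

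I would present this in three short steps: first record the formula for $D_iD_j q$; second verify by direct substitution that $e_r$ and $e_\theta$ are eigenvectors with eigenvalues $q''(r)$ and $q'(r)/r$ respectively (this is the only computation, and it is a one-liner using $\sum_j x_j e_{\theta,j} = 0$); third conclude that both eigenvalues are negative on $S$. There is essentially no obstacle here — the only point requiring a word of care is that the angular eigenvalue is $q'(r)/r$ rather than $q'(r)$, so one must use $r > 0$ on $S$ to deduce its sign, but this is built into the definition of $S$.
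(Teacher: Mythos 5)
Your proof is correct and is essentially the same direct computation as the paper's: the paper verifies negative definiteness of the Hessian by checking $q_{xx}<0$, $q_{yy}<0$ and $q_{xx}q_{yy}-q_{xy}^2=\tfrac{1}{r}q'q''>0$, which is exactly the product of your two eigenvalues $q''$ and $q'/r$. Your eigenvector packaging is a slightly cleaner way to organize the identical calculation (and would generalize verbatim to $\mathbb{R}^n$), but there is no substantive difference in the argument.
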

\begin{proof} This is a straightforward computation, using the fact that $r_x = x/r$ and $r_y = y/r$.  At a point in $S$ we have
\[
\begin{split}
& q_x = \frac{x}{r} q', \quad q_{xx} = \frac{x^2}{r^2} q'' + \frac{1}{r} q' - \frac{x^2}{r^3} q' = \frac{x^2}{r^2} q'' + \frac{y^2}{r^3}q '<0 \\
& q_y = \frac{y}{r} q', \quad q_{yy} = \frac{y^2}{r^2} q'' + \frac{1}{r} q' - \frac{y^2}{r^3} q' = \frac{y^2}{r^2} q'' + \frac{x^2}{r^3} q'<0\\
& q_{xy} = \frac{xy}{r^2} q'' - \frac{xy}{r^3} q', \quad q_{xx}q_{yy} - q_{xy}^2 = \frac{1}{r} q' q'' >0,
\end{split}
\]
as required.
\end{proof}

Next we have the following elementary lemma about extending concave functions \cite{G}.

\begin{lemma}\label{Ghomi}
Let $W$ be an open bounded convex set in $\mathbb{R}^n$ and let $f$ be a smooth real-valued function defined on the set
$$W^{\delta} = \{ p \in W \ | \ \emph{dist}(p, \partial W) < \delta \},$$
for some $\delta>0$.  Assume that there exists a constant $c \in [-\infty, \infty)$ such that $f(p)>c$ on $W^{\delta}$ and $f(p) \rightarrow c$ as $p$ tends to any point in $\partial W$.  Also assume that $D^2 f <0$ and $Df \neq 0$ on $W^{\delta}$.   Then there exists a smooth concave function $F: W \rightarrow (c, \infty)$ which coincides with $f$ on $W^{\delta'}$ for some $0<\delta'<\delta$.  

Moreover, if $W$ is a ball in $\mathbb{R}^n$ centered at a point $P$ and $f$ a  function of the distance $r$ from $P$, then $F$ can also be taken to be a function of $r$.
\end{lemma}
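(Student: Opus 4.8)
The plan is to obtain $F$ by interpolating between $f$ on a thin collar near $\partial W$ and a suitably chosen constant on the interior of $W$, the interpolation being carried out via a concavity‑preserving smoothing of $\min$.

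\emph{Strategy.} If $W^{\delta}=W$ one can simply take $F=f$, so I would assume $W^{\delta}\ne W$. For $x\in W$ write $\rho(x)=\mathrm{dist}(x,\partial W)$. The one consequence of the hypotheses that I would use is that, for any line segment $\gamma$ with image in $W^{\delta}$, $\tfrac{d^{2}}{dt^{2}}f(\gamma(t))=\gamma'(t)^{T}D^{2}f(\gamma(t))\,\gamma'(t)\le 0$, so $f\circ\gamma$ is concave — even though $W^{\delta}$ itself is not convex. Because $f>c$ on $W^{\delta}$ and $f\to c$ on $\partial W$, the set $K=\{\,\delta/4\le\rho\le\delta/2\,\}$ is a nonempty compact subset of $W^{\delta}$ on which $f\ge m$ for some $m>c$, while $\sup_{\{\rho<\delta'\}}f\to c$ as $\delta'\downarrow 0$. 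I would then fix the constant $\mu=\tfrac{c+m}{2}\in(c,m)$ (any real number less than $m$ if $c=-\infty$), a number $\varepsilon\in\bigl(0,\tfrac{m-c}{4}\bigr)$, and a radius $\delta'<\delta/4$ small enough that $\sup_{\{\rho<\delta'\}}f<\mu-\varepsilon$.

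\emph{Construction.} Next I would invoke the standard regularized minimum: let $\psi_{\varepsilon}$ be the convolution of $t\mapsto|t|$ with an even mollifier supported in $(-\varepsilon,\varepsilon)$, so $\psi_{\varepsilon}$ is smooth, even, convex, agrees with $|\cdot|$ off $[-\varepsilon,\varepsilon]$ and satisfies $|t|\le\psi_{\varepsilon}(t)\le|t|+\varepsilon$; put $\theta_{\varepsilon}(a,b)=\tfrac{a+b}{2}-\tfrac12\psi_{\varepsilon}(a-b)$. Then $\theta_{\varepsilon}$ is smooth, jointly concave (its Hessian is negative semidefinite since $\psi_{\varepsilon}$ is convex), nondecreasing and concave in each variable (since $|\psi_{\varepsilon}'|\le 1$ and $\psi_{\varepsilon}''\ge 0$), obeys $\min(a,b)-\varepsilon\le\theta_{\varepsilon}(a,b)\le\min(a,b)$, and equals $a$ when $a\le b-\varepsilon$ and $b$ when $b\le a-\varepsilon$. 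I would then define $F$ on $W$ by $F=\theta_{\varepsilon}(f,\mu)$ on $W^{\delta/2}$ and $F=\mu$ on $\{\rho>\delta/4\}$. These open sets cover $W$; on the overlap $\{\delta/4<\rho<\delta/2\}\subset K$ we have $f\ge m\ge\mu+\varepsilon$, so $\theta_{\varepsilon}(f,\mu)=\mu$ there and the two formulas agree, whence $F\in C^{\infty}(W)$. On $W^{\delta'}$ we have $f<\mu-\varepsilon$, so $F=f$ there. And $F>c$ everywhere: this is clear where $F$ equals $f$ (since $f>c$ on $W^{\delta}$) or $\mu$, and on $\{|f-\mu|<\varepsilon\}$ one has $F\ge\min(f,\mu)-\varepsilon>\mu-2\varepsilon>c$.

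\emph{Concavity, and the radial case.} It then remains to verify $(F\circ\gamma)''\le 0$ for every line segment $\gamma$ in $W$. Where $\gamma(t)\in\{\rho>\delta/4\}$, $F$ is locally constant. Where $\gamma(t)\in W^{\delta/2}$, one has $F\circ\gamma=\theta_{\varepsilon}(f\circ\gamma,\mu)$ locally, $f\circ\gamma$ is concave by the Strategy step, and $a\mapsto\theta_{\varepsilon}(a,\mu)$ is smooth, concave and nondecreasing, so the chain rule gives $(F\circ\gamma)''=(\partial_{a}^{2}\theta_{\varepsilon})\,\bigl((f\circ\gamma)'\bigr)^{2}+(\partial_{a}\theta_{\varepsilon})\,(f\circ\gamma)''\le 0$. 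This produces the required smooth concave $F\colon W\to(c,\infty)$ with $F=f$ on $W^{\delta'}$. For the final assertion: if $W$ is the ball of radius $R$ about $P$ and $f=f(r)$ with $r$ the distance from $P$, then $\rho=R-r$, the sets $W^{\delta/2}$ and $\{\rho>\delta/4\}$ are concentric balls, and $\theta_{\varepsilon}(f(r),\mu)$ depends only on $r$, so the $F$ built above is automatically a function of $r$.

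\emph{The main obstacle.} The step requiring the most care is the global concavity of $F$: concavity has to hold along \emph{every} segment in $W$, including segments that run through both pieces of the construction, and ``concave on each of two overlapping open sets'' does not by itself give ``concave on the union''. The construction survives because of two facts working together — that $D^{2}f\le 0$ controls the second derivative of $f$ along any segment lying inside the non‑convex collar $W^{\delta}$, and that $\theta_{\varepsilon}$ is jointly concave and coordinatewise nondecreasing — which force $(F\circ\gamma)''\le 0$ by the chain rule. A secondary technical point is calibrating $\mu$ and $\varepsilon$ so that $F$ is simultaneously equal to $f$ near $\partial W$, constant in the interior, and $>c$ throughout; this is exactly what the gap between $\sup_{\{\rho<\delta'\}}f\to c$ and $\inf_{K}f=m>c$ provides.
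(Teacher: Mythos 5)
Your proof is correct, but it takes a genuinely different route from the paper's. The paper first forms the hard truncation $\tilde f=\min(f,a)$ for a level $a$ near $c$, observes this is concave on $W$, mollifies it in space to get a smooth concave function on the interior, and then glues the mollification to $f$ near $\partial W$ with a bump function; the concavity of the glued function on the transition region is not automatic and is rescued by the \emph{strict} inequality $D^2f<0$, which dominates the error terms because $\tilde f_{\ve}\to f$ in $C^2$ there. You instead apply a smooth, monotone, concave regularized minimum $\theta_{\ve}(\cdot,\mu)$ directly to $f$, so smoothness is built in and concavity follows pointwise along every segment from the composition rule $(\partial_a^2\theta_{\ve})\,((f\circ\gamma)')^2+(\partial_a\theta_{\ve})\,(f\circ\gamma)''\le 0$; the match with the constant $\mu$ on the overlap collar is exact rather than via a partition of unity, so there are no error terms to absorb. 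This buys you something: your argument uses only $D^2f\le 0$ and never needs $Df\neq 0$ or the level sets of $f$, and the radial case is immediate; the paper's argument, by contrast, produces an $F$ that is a smoothed copy of $f$ (rather than a constant) in the deep interior, which is more than the lemma requires. One tiny calibration point: when $c=-\infty$ your constraint $\ve\in\bigl(0,\tfrac{m-c}{4}\bigr)$ is vacuous, so you should add the requirement $\ve<m-\mu$ explicitly to guarantee $f\ge m\ge\mu+\ve$ on the overlap $\{\delta/4<\rho<\delta/2\}$; this is a one-line fix and does not affect the argument.
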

\begin{proof} The proof is essentially contained in \cite{G}, but we give the argument here for the sake of completeness.
First note that the level sets $\{ f = a \}$ for constants $a$ close to (and strictly larger than) $c$ are smooth convex hypersurfaces contained in $W^{\delta}$.  If $c=-\infty$ then ``$a$ close to $c$'' means that $a$ is sufficiently negative.

Fix now such an $a>c$.  Define a function $\tilde{f}: W \rightarrow \mathbb{R}$ by
$$\tilde{f} = \left\{ \begin{array}{ll} f,  \quad & \textrm{on } \{ f \le a \} \\ a, \quad  & \textrm{otherwise.} \end{array} \right.$$
Then the function $\tilde{f}$ is locally concave on $W$ and hence concave on $W$ (away from $\{ f=a \}$ it satisfies $D^2 \tilde{f} \le 0$ and near $\{ f= a\}$ it is the minimum of two concave functions).

For small $\ve>0$ let $\tilde{f}_{\ve} = \tilde{f} * \eta_{\ve}$ be the convolution of $\tilde{f}$ by a mollifier $\eta_{\ve}$ given by $\eta_{\ve}(p) = \ve^{-n} \eta (\ve^{-1}p)$ for $\eta$ a smooth nonnegative  function supported in the unit ball $B$ with $\int_B \eta dx=1$.  Then $\tilde{f}_{\ve}$ is smooth and concave on its domain of definition $W_{\ve} := \{ p \in W \ | \ \textrm{dist}(p, \partial W) > \ve \}.$

Now fix $a'$ with $a>a'>c$.  Then $\tilde{f}$ is smooth and concave on $\{ f < a' \}$.  Choose a smooth bump function $\phi$ which is equal to $0$ on $\{ f \ge a' \}$ and is equal to $1$ on $W^{\delta'}$ for some small $\delta'>0$.

Define a smooth function $F: W \rightarrow \mathbb{R}$ by 
$$F = (1-\phi) \tilde{f}_{\ve} + \phi \tilde{f}.$$
Then $F$ agrees with the concave function $\tilde{f}$ and hence $f$ on $W^{\delta'}$ and agrees with the concave function $\tilde{f}_{\ve}$ on $\{ f \ge a'\}$.  Here we choose $\ve>0$ small enough so that $\tilde{f}_{\ve}$ is defined on the complement of $W^{\delta'}$.  It remains to check that $F$ is concave on the compact set $K=\{ f \le a' \} \setminus W^{\delta'}$.
 Write $F = \tilde{f}_{\ve} + \phi (\tilde{f} - \tilde{f}_{\ve})$.  Then
$$F_{ij} = (\tilde{f}_{\ve})_{ij} + \phi_{ij} (\tilde{f} - \tilde{f}_{\ve}) + \phi_i (\tilde{f}-\tilde{f}_{\ve})_j + \phi_j (\tilde{f}-\tilde{f}_{\ve})_i + \phi (\tilde{f} - \tilde{f}_{\ve})_{ij}.$$
The result now follows from the fact that $\tilde{f}_{\ve} \rightarrow f$ uniformly $C^2$ on $K$ as $\ve \rightarrow 0$.  Indeed,  since $D^2f<0$ on the compact set $K$, it is uniformly strongly concave on $K$, so we can choose $\ve>0$ small enough that that the matrix $(\tilde{f}_{\ve})_{ij}$ is uniformly negative definite.  But since $\tilde{f}_{\ve} \rightarrow f$ uniformly $C^2$ on $K$ as $\ve \rightarrow 0$, all the other terms tend to zero, so  for $\ve>0$ sufficiently small we get $(F_{ij}) <0$ on $K$.

Finally, in the case when $W$ is a ball in $\mathbb{R}^n$ centered at a point $P$ and $f$ is a function of the distance $r$ from $P$, we define $\tilde{f}_{\ve}(r)$ to be the convolution of $\tilde{f}(r)$ with a standard mollifier as a function of $r$.  We also choose the bump function $\phi$ to be a function of $r$, and it follows that $F$ is a function of $r$. 
\end{proof}

\section{Construction of the initial data} \label{sectionconstruct}

 Our starting point is a radial function defined on a disk.  Fix an integer $N \ge 1$ and some $\alpha \in [0,1/2)$ throughout the section.
Let $D$ be the open disk in $\mathbb{R}^2$ of radius 2 centered at the point $(0,2)$, and let $r$ denote the distance from $(0,2)$.

\begin{proposition}\label{prop1}  
  There exists a smooth function $U$ on the closed disk $\ov{D}$ satisfying
\begin{enumerate}
\item[(a)] $U$ is positive on $D$, vanishes on $\partial D$, and $\nabla U$ is nowhere zero on $\partial D$.
\item[(b)] $U$ is $\alpha$-concave on $D$.
\item[(c)] $U$ satisfies the compatibility conditions $(\ref{compk})$ on $\partial D$, for $k=1, \ldots, N$.
\end{enumerate}
Moreover, $U$ is a function of $r$.
\end{proposition}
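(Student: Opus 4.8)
The plan is to build $U$ first only on a collar neighborhood of $\partial D$, where the compatibility conditions \eqref{compk} are imposed, and then extend it inward to all of $D$ by applying Lemma \ref{Ghomi} to the function $U^\alpha$ (or to $\log U$ when $\alpha=0$). Write $R=2$ for the radius of $D$. For a radial function $U(r)$ one has $\Delta U = U''+\tfrac1r U'$, hence $\Delta^k U = U^{(2k)} + (\text{lower order terms in }r)$, so by rotational symmetry the $k$th compatibility condition is the single scalar equation
$$U^{(2k)}(R) + G_k\big(U(R),U'(R),\dots,U^{(2k-1)}(R)\big)=0$$
for a universal smooth $G_k$ (using that $F_k$ has degree $\le 2k-1$, so on a radial function evaluated at $r=R$ it only involves derivatives of order $\le 2k-1$). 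Setting $U(R)=0$, fixing $U'(R)=-a$ for some $a>0$, and setting all derivatives of odd order $\ge 3$ and all derivatives of order $>2N$ equal to $0$, these equations recursively and uniquely determine $U^{(2)}(R),U^{(4)}(R),\dots,U^{(2N)}(R)$. By Borel's lemma there is a smooth radial function $\hat U$ defined near the circle $\{r=R\}$ with exactly this jet at $r=R$; since $\hat U(R)=0$ and $\hat U'(R)=-a<0$, after passing to a collar $D^\delta=\{R-\delta<r<R\}$ we may assume $\hat U>0$, $\hat U'<0$ on $D^\delta$ and $\hat U$ smooth up to $\partial D$. Thus $\nabla\hat U\neq 0$ on $\partial D$ and $\hat U$ satisfies \eqref{compk} for $k=1,\dots,N$.

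Next I would check that $\hat U^\alpha$ is strongly concave near $\partial D$. Near $\partial D$, $\hat U(r)=a(R-r)+O((R-r)^2)$ with $a>0$. For $\alpha\in(0,1)$ put $q=\hat U^\alpha$, so
$$q'=\alpha\hat U^{\alpha-1}\hat U',\qquad q''=\alpha(\alpha-1)\hat U^{\alpha-2}(\hat U')^2+\alpha\hat U^{\alpha-1}\hat U'';$$
since $\alpha-2<\alpha-1<0$, the first term in $q''$ dominates as $r\uparrow R$, and both $q'$ and $q''$ tend to $-\infty$. Hence, shrinking $\delta$, we get $q'<0$ and $q''<0$ on $D^\delta$, and Lemma \ref{l0} (applicable since $r\approx R=2\neq 0$ there) shows $q=\hat U^\alpha$ is strongly concave on $D^\delta$. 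For $\alpha=0$ the same computation with $q=\log\hat U$ gives $q'=\hat U'/\hat U\to-\infty$ and $q''=\hat U''/\hat U-(\hat U'/\hat U)^2\to-\infty$, so again $q$ is strongly concave on $D^\delta$. In either case $q>c$ on $D^\delta$ with $q\to c$ at $\partial D$ and $Dq\neq 0$ on $D^\delta$, where $c=0$ if $\alpha>0$ and $c=-\infty$ if $\alpha=0$.

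Finally I would apply Lemma \ref{Ghomi} with $W=D$, $f=q$, and this $c$: since $D$ is a ball and $q$ is radial, it yields a smooth concave radial function $F:D\to(c,\infty)$ agreeing with $q$ on $D^{\delta'}$ for some $0<\delta'<\delta$. Set $U:=F^{1/\alpha}$ if $\alpha>0$ and $U:=e^F$ if $\alpha=0$. Then $U$ is smooth, positive and radial on $D$, and $\alpha$-concave by definition since its $\alpha$-th power (resp.\ logarithm) equals the concave function $F$, giving (b). On $D^{\delta'}$ we have $U=\hat U$, which is smooth up to $\partial D$ with $U=0$ and $\nabla U\neq 0$ there; hence $U$ extends to an element of $C^\infty(\ov D)$ satisfying (a), and since \eqref{compk} depends only on the jet of $U$ along $\partial D$ — which equals that of $\hat U$ — property (c) also holds.

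The main obstacle is the point in the second paragraph: one must see that raising $\hat U$ to the power $\alpha$ (or taking its logarithm) turns the essentially linear, and hence non-concave, near-boundary profile of $\hat U$ into one with strictly negative second derivative, so that Lemma \ref{l0} applies and the collar data can be fed into Lemma \ref{Ghomi}; the rest is bookkeeping with jets and the two cited lemmas. Note this argument works for all $\alpha\in[0,1)$ — the restriction $\alpha<1/2$ is used only later, in breaking $\alpha$-concavity for positive time.
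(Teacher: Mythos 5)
Your proposal is correct and follows essentially the same route as the paper: a radial profile whose even-order derivatives at $r=2$ are prescribed recursively to meet the compatibility conditions, near-boundary strong concavity of $U^\alpha$ (or $\log U$) coming from the dominant $(\alpha-1)(q')^2$ term, and an inward extension via Lemma \ref{Ghomi} applied to the concave power. The only differences are cosmetic — you invoke Borel's lemma and fix $U''(2)$ directly from the first compatibility condition, where the paper instead rescales $U$ — and your remark that the argument works for all $\alpha\in[0,1)$ is accurate.
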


\begin{proof}
Such a function $U(x, y)$ is easily constructed as follows.  Writing $r$ for the distance from $(0,2)$, let $q: [0,2] \rightarrow \mathbb{R}$ be 
a smooth function satisfying
\begin{equation}
q(r) >0, \ 0\le r <2, \ q(2)=0
\end{equation}
and
\begin{equation} \label{parta}
q'(2)<0, \ q''(2)+ \frac{1}{2} q'(2)>0.
\end{equation}
Then for $\alpha \in (0,1/2)$ we see that for $r$ close to $2$ we have
$$(q^{\alpha})' =  \alpha q^{\alpha -1} q' <0,$$
and
$$(q^{\alpha})'' =  \alpha q^{\alpha-2} ( (\alpha-1) (q')^2 + q q'')<0.$$
For the case $\alpha=0$ replace $q^{\alpha}$ by $\log q$ and the same holds.
This implies that if we define $U(x,y) = q(r)$ for $r$ close to $2$ then $U$ is strongly $\alpha$-concave there by Lemma \ref{l0}. To satisfy $\Delta U= |\nabla U|^2$ on $\partial D$ we note that by (\ref{parta}), the quantities $\Delta U$ and $| \nabla U|^2$ are strictly positive on $\partial D$ and hence we can scale $U$ to ensure that the first compatibility condition holds.

We can then recursively prescribe $q^{(2k)}(2)$ for $k=2,3,\ldots, N$ so that the $k$th compatibility condition (\ref{compk}) for $U$ holds up to order $N$.  This does not affect the positivity or $\alpha$-concavity of $U$ near $\partial D$.  We can then extend $U$ by Lemma \ref{Ghomi} to a positive $\alpha$-concave function inside $D$.  
\end{proof}

We now define a new convex domain $\Omega_0$, obtained by modifying the disk $D$.  The part of the boundary $\partial D$ below the line $y=2$ can be written as a graph 
$y=G(x)$ for
\begin{equation} \label{defnG}
G(x) = 2 - \sqrt{4-x^2}, \quad -2<x<2.
\end{equation}
Let $\delta \in (0,1/4)$ be a small positive constant depending only on $\alpha$, to be determined later.   
Define a new function $g:(-2,2) \rightarrow \mathbb{R}$ by modifying $G$ as follows:
\begin{equation} \label{defng}
g(x) = \left\{ \begin{array}{ll}  1/20, &  \qquad x \in [-\delta, \delta ] \\ G(x), & \qquad x \in (-2, -1/2] \cup [1/2, 2) \end{array} \right.
\end{equation}
and extend $g$ to be a smooth function on $(-2,2)$ so that $g''>0$ on the remaining intervals $(-1/2, -\delta)$ and $(\delta, 1/2)$.  Note that $g$ is a convex function.  We now define our convex domain to be
$$\Omega_0 = D \cap \{ (x,y) \in (-2,2) \times \mathbb{R} \ | \ y > g(x) \}.$$
Note that $\partial \Omega_0$ is smooth, and contains a line segment $[-\delta, \delta] \times\{1/20 \}$.

The main result of this paper is the following construction:

\begin{theorem} \label{thmconstruction}
There is a smooth function $v$ on $\ov{\Omega}_0$ satisfying the following conditions:
\begin{enumerate}
\item[(a)] $v$ is positive on $\Omega_0$, vanishes on $\partial \Omega_0$.
\item[(b)] $v$ is $\alpha$-concave on $\Omega_0$.
\item[(c)] $\nabla v$ does not vanish at any point of $\partial \Omega_0$.
\item[(d)] $v$ satisfies the compatibility conditions $(\ref{compk})$ on $\partial \Omega_0$, for $k=1, \ldots, N$.
\item[(e)] The map
$$x\mapsto v_y(x, 1/20), \qquad \textrm{for } x \in [-\delta, \delta],$$
is positive and strongly convex (namely $(v_{y})_{xx} >0$). 
\end{enumerate}
\end{theorem}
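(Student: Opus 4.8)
The plan is to construct $v$ first on a collar of $\partial\Omega_0$ and then to extend it into $\Omega_0$ by Lemma~\ref{Ghomi}. Let $\Gamma_0\subset\partial\Omega_0$ be the closed arc of the graph $y=g(x)$ on which $g>G$; it lies in $\{|x|<1/2\}$ and consists of the flat segment $[-\delta,\delta]\times\{1/20\}$ together with the two convex connecting arcs. Off a neighbourhood of $\Gamma_0$ the domain $\Omega_0$ coincides with $D$, so there we simply set $v=U$, where $U$ is the function of Proposition~\ref{prop1}: by that proposition $v$ is then positive, vanishes with non-vanishing gradient on $\partial\Omega_0\setminus\Gamma_0$, satisfies the compatibility conditions \eqref{compk} for $k\le N$ there, and $v^\alpha$ (respectively $\log v$ when $\alpha=0$) is strongly concave near $\partial\Omega_0\setminus\Gamma_0$.

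Near $\Gamma_0$ we work in boundary-normal coordinates $(s,\rho)$ for $\partial\Omega_0$, with $\rho\ge 0$ the distance to $\partial\Omega_0$, and seek $v$ in the form $v=\rho\,w(s,\rho)$; then automatically $v=0$ on $\partial\Omega_0$, and $v>0$, $\nabla v\ne 0$ on $\partial\Omega_0$ follow from $w>0$ on $\partial\Omega_0$, with $\partial_\nu v|_{\partial\Omega_0}=w(\cdot,0)$. Because $\nabla U\ne 0$ on $\partial D$, near $\partial\Omega_0\setminus\Gamma_0$ one also has $U=\rho\,w_U$ for a smooth $w_U>0$, so matching $v$ to $U$ there amounts to matching $w$ to $w_U$. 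We prescribe $w(\cdot,0)=b$, a positive smooth function equal to a function $b(x)$ of $x$ alone on the flat segment and equal to $w_U(\cdot,0)=|\nabla U|$ near the two endpoints of $\Gamma_0$; we then prescribe the $\rho$-derivatives of $w$ at $\rho=0$ so as to enforce \eqref{compk} for $k\le N$ (each condition determining one further $\rho$-derivative of $w$ on $\partial\Omega_0$ in terms of the lower ones, of $b$, and of the curvature of $\partial\Omega_0$), leaving the remaining part of the $\rho$-jet free and taking it equal to that of $w_U$ near the endpoints of $\Gamma_0$. Since $U$ itself satisfies \eqref{compk}, the compatibility-prescribed part of the jet of $w$ agrees there with that of $w_U$, so $v=\rho\,w$ is a single smooth function on a collar of $\partial\Omega_0$, equal to $U$ near $\partial\Omega_0\setminus\Gamma_0$, satisfying \eqref{compk} for $k\le N$ on all of $\partial\Omega_0$, satisfying (a) and (c), and — since $(v_y)_{xx}=b''$ on the flat segment — satisfying (e) provided $b''>0$ there.

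It remains to choose $b$ on the flat segment so that $v$ is strongly $\alpha$-concave near $\partial\Omega_0$. On the flat segment $\rho=y-1/20=:\tau$, and since \eqref{comp1} forces the coefficient of $\tau^2$ to equal $\tfrac12 b(x)^2$, we have $v=b(x)\,\tau+\tfrac12 b(x)^2\,\tau^2+O(\tau^3)$; computing $D^2(v^\alpha)$ (or $D^2\log v$) for small $\tau>0$, its $yy$-entry tends to $-\infty$ while its $xx$-entry and its determinant have, to leading order, the signs of $(\alpha-1)(b')^2+b\,b''$ and of $(1-2\alpha)(b')^2-(1-\alpha)\,b\,b''$. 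Hence $v$ is strongly $\alpha$-concave in a one-sided neighbourhood of the flat segment as soon as
\[
0 < b\,b'' < \frac{1-2\alpha}{1-\alpha}\,(b')^2 ,
\]
and a positive strongly convex $b$ satisfying this exists precisely because $\alpha<1/2$: for instance $b(x)=(x+a)^\beta$ with $a>\delta$ and any $\beta\in\bigl(1,(1-\alpha)/\alpha\bigr)$, or any $\beta>1$ when $\alpha=0$. Along the connecting arcs, where (e) imposes nothing, we keep $b$ bounded away from $0$; as these arcs are convex, an argument like the proof of Proposition~\ref{prop1} shows $v$ is strongly $\alpha$-concave near them too, for $\delta$ small depending only on $\alpha$. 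Finally, $\Omega_0$ being convex and $v^\alpha$ (resp.\ $\log v$) being strongly concave near $\partial\Omega_0$ with convex low sublevel sets, Lemma~\ref{Ghomi} applied to $f=v^\alpha$ with $c=0$ — or to $f=\log v$ with $c=-\infty$ when $\alpha=0$ — extends $v$ to a smooth positive function on $\overline\Omega_0$ which is $\alpha$-concave on $\Omega_0$ and unchanged near $\partial\Omega_0$; then (a)–(e) all hold, (d) holding on $\partial\Omega_0\setminus\Gamma_0$ because $v=U$ there locally and on $\Gamma_0$ by the jet prescription, and (b) by the extension.

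The crux is the choice of $b$ on the flat segment: property (e) requires $b''>0$, whereas $\alpha$-concavity of $v$ forces $0<b\,b''<\tfrac{1-2\alpha}{1-\alpha}(b')^2$, and these are compatible exactly when $\alpha<1/2$, which is where the hypothesis is essential. The remaining, more routine, technical points are to verify strong $\alpha$-concavity of $v$ uniformly along the convex connecting arcs, and to arrange the free part of the normal jet of $w$ so that the near-$\Gamma_0$ construction glues smoothly to $v=U$ near the endpoints of $\Gamma_0$.
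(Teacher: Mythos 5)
Your construction follows the same strategy as the paper's: flatten the boundary near $x=0$, rebuild $v$ from its normal-direction Taylor expansion along $\partial\Omega_0$ with the even-order coefficients dictated by the compatibility conditions, choose the first coefficient $b=\partial_\nu v|_{\partial\Omega_0}$ on the flat segment to be positive and strongly convex subject to the differential inequality $0<b\,b''<\tfrac{1-2\alpha}{1-\alpha}(b')^2$ (this is exactly where $\alpha<1/2$ enters, and your leading-order signs for the $xx$-entry and the determinant agree with the paper's expansions \eqref{vvxx}--\eqref{det}), and then push the strongly $\alpha$-concave collar function into the interior with Lemma~\ref{Ghomi}. Your explicit choice $b(x)=(x+a)^{\beta}$ with $1<\beta<(1-\alpha)/\alpha$ is a clean alternative to the paper's ODE normalization $\psi\psi''-\tfrac{2}{3}(1-2\alpha)(\psi')^2=-1$: indeed $b\,b''/(b')^2=(\beta-1)/\beta$, so both requirements hold with uniform margins on $[-\delta,\delta]$.

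The one genuine gap is the region just outside the flat segment. Since $g$ is smooth and $g''\equiv 0$ on $[-\delta,\delta]$, the curvature of $\partial\Omega_0$ is arbitrarily small immediately past $x=\pm\delta$, so there is a transition zone where neither the boundary-curvature terms (the analogues of $-(1-\alpha)(g')^2f^2$ and $(1-\alpha)(y-g)g''f^4$ in \eqref{vvxx} and \eqref{det}) nor ``$b$ bounded away from $0$'' gives you anything: strict $\alpha$-concavity there must still come from the order-$(y-g)^2$ coefficient, i.e.\ from the differential inequality on $b$, which you only impose on $[-\delta,\delta]$. The paper closes this by forcing the ODE for $f=\psi$ on the doubled interval $[-2\delta,2\delta]$ and then splitting into $|x|\le\delta+\rho$ (where the inequality on $f$ does the work and $g',g''$ are negligible) and $|x|>\delta+\rho$ (where $(g')^2$, resp.\ $g''$, is uniformly positive and the zeroth/first-order terms dominate). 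Your $b=(x+a)^{\beta}$ extends verbatim to $[-2\delta,2\delta]$ (take $a>2\delta$), so the fix is immediate, but as written the argument does not cover the transition zone. A smaller omission of the same kind: along the genuinely curved connecting arcs the verification is not ``like Proposition~\ref{prop1}'' --- $v$ is not radial there --- and does require the term-by-term bookkeeping of \eqref{vvxx}--\eqref{det} to see which contribution dominates where.
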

\begin{proof}  We obtain $v$ by modifying the function $U$ constructed  in Proposition \ref{prop1}, which we may assume for convenience is defined as a smooth function on 
 all of $\mathbb{R}^2$.   We may write $U(x,y)$ on $[-1,1] \times [-1,1]$ as
 $$U(x,y) = \sum_{\ell = 1}^{2N} (y-G(x))^{\ell} E_{\ell}(x)+  (y-G(x))^{2N+1} R(x, y)$$
 for smooth functions $ E_1, \ldots, E_{2N}, R$, where we recall that $G$ is defined by (\ref{defnG}).  Indeed, this follows by considering the new variable $ \tilde{y}=y-G(x)$, and considering for each $x\in [-1,1]$ the Taylor expansion of the function $U(x,  \tilde{y})$ in powers of $ \tilde{y}$ around $ \tilde{y}=0$ and using the Taylor remainder theorem.

 We now define a function $v(x,y)$ on $[-1,1]\times[-1,1]$ by
 \begin{equation} \label{defnv}
 v(x,y) =\sum_{\ell=1}^{2N} (y-g(x))^{\ell} e_{\ell}(x)+  (y-g(x))^{2N+1} R(x, y)
 \end{equation}
 where $g(x)$ is as in (\ref{defng}) and the smooth functions $e_1, \ldots, e_{2N}$ are to be determined.

 For simplicity of notation, in what follows we will denote $E_1, E_2$ by $F, H$ and $e_1, e_2$ by $f, h$ respectively.  Note that since $\nabla U$ is nonvanishing on $\partial D$, there exists a constant $c>0$ such that
 \begin{equation} \label{Fc}
 F(x) \ge c >0.
 \end{equation}
 We define $f$ as follows.  First choose $\delta \in (0,1/4)$ so that the solution $\psi(x)$ of the ODE problem
 $$\psi \psi'' - \frac{2}{3} (1-2\alpha) (\psi')^2 =-1, \quad \textrm{on } [-2\delta, 2\delta],$$
 $$\psi(0)=1, \psi'(0) = \sqrt{2/(1-2\alpha)}$$
 satisfies $\psi'' >0$ and $\psi>0$ on $[-2\delta, 2\delta]$.  Then we define $f$ to be a smooth function on $[-1,1]$ such that 
 $$f(x) =  \left\{ \begin{array}{ll}  \psi(x), & \quad  \textrm{for }  -2\delta \le x \le 2\delta, \\
 F(x), & \quad \textrm{for }  x \in [-1, -1/2] \cup [1/2, 1] \end{array} \right. $$
 and 
 \begin{equation} \label{lbf}
 f(x) \ge c>0 \quad \textrm{on } [-1,1],
 \end{equation} where we assume without loss of generality that $c$ is the same constant as in (\ref{Fc}).
 Observe that from the definition of $f$ and $g$, we have $v_y(x, 1/20)=\psi(x)$ for $x \in [-\delta, \delta]$ and hence  $v(x,y)$ will satisfy (e).
 
 We are interested only in the behavior of $v$ close to the boundary $y=g(x)$ and so in what follows we can neglect some higher order terms of $|y-g|$.  Write
 $$v = (y-g) f + (y-g)^2h + O(|y-g|^3)$$
Differentiating this we obtain
 \begin{equation} \label{vcomp}
 \begin{split} v_y = {} & f + 2(y-g)h  + O(|y-g|^2) \\
 v_x = {} & -g'f + (y-g)f' - 2(y-g)g' h + (y-g)^2 h' \\ {} & + (y-g)^2 O(|g'|)+ O(|y-g|^3) \\
v_{yy} = {} &    2h+ O(|y-g|) \\
v_{xy} =  {} & f'   -2g'h +2(y-g)h' + (y-g) O(|g'|)+  O(|y-g|^2) \\
v_{xx} = {} & -g''f - 2g'f'+ 2(g')^2 h +(y-g)f''  +   (y-g)O(|g'|, |g''|)  + O(|y-g|^2).
\end{split}
\end{equation}
We will now choose the function $h(x)$ so that $v$ satisfies the first compatibility condition $\Delta v = |\nabla v|^2$ on the boundary $\{ y=g(x) \}$.  Observe that on $(x,g(x))$ for $x \in [-1,1]$,
$$\Delta v =  2h -g''f - 2g'f' + 2(g')^2h$$
and
$$|\nabla v|^2 =  v_x^2+v_y^2 = (g')^2 f^2 + f^2,$$
so that we require 
\begin{equation} \label{hxgx}
h(x) = \frac{(g')^2f^2+ f^2 +g''f+2g'f'}{2(1+(g')^2)}.
\end{equation}
We define $h(x)$ by (\ref{hxgx}).   It follows that $v(x,y)$  satisfies the first compatibility condition $\Delta v = |\nabla v|^2$  on $\partial\Omega_0 \bigcap  [-1, 1]\times[-1,1]$.
Moreover, since our function $U$ constructed in Proposition \ref{prop1} satisfies the first compatibility condition $\Delta U = |\nabla U|^2$ on the boundary, we have
$$H(x) = \frac{(G')^2F^2+ F^2  +G''F+2G'F'}{2(1+(G')^2)},$$
for $x\in [-1,1]$.  Since $f=F$ and $g=G$ whenever $x$ is in the complement of $[-1/2, 1/2]$, it follows that $h=H$ in the complement of $[-1/2,1/2]$.
 
 We can now similarly define $e_3, e_4, \ldots, e_{2N}$ so that  $v(x,y)$ satisfies  the $k$th compatibility conditions  for $k=2, 3, \ldots, N$.
  Indeed applying the operator $\Delta^2$ to (\ref{defnv}) and evaluating on $y=g(x)$, the second compatibility condition takes the form
 \begin{equation}\label{e4}
 \begin{split}
e_4(x) = {} &  \textrm{smooth expression in terms of $e_3(x)$, $g(x)$, $f(x)$, $h(x)$} \\ {} & \textrm{ and their derivatives.}
 \end{split}
 \end{equation}
We can simply define $e_3(x) = E_3(x)$.
Define $e_4(x)$ by the formula (\ref{e4}) so that $v(x,y)$ satisfies the second compatibility condition.  Then since $U$ satisfies the second compatibility condition, and $f=F$, $g=G$, $h=H$, $e_3=E_3$ when $x$ is in the complement of $[-1/2, 1/2]$ it follows that $e_4=E_4$ in the complement of $[-1/2,1/2]$.

Continuing inductively, we define $e_5, \ldots, e_{2N}$ so that the $k$th compatibility conditions are satisfied  for $k=2, 3, \ldots, N$. Moreover  when $x$ is in the complement of $[-1/2, 1/2]$,  the functions $f,g,h, e_3, \ldots, e_{2N}$ coincide with $F, G, H, E_3, \ldots, E_{2N}$ respectively.

Finally we will show that $v$ is $\alpha$-concave in the set $$S_{\ve}= \{ (x,y) \ | \  -1\le x \le 1, \ g(x) < y < g(x)+ \ve \},$$
for $\ve>0$ sufficiently small.  

For this, we need to show that for all $x\in [-1 ,1]$ and $y-g(x)$ sufficiently small and positive we have
$$vv_{xx} - (1-\alpha) v_x^2<0, \ vv_{yy} - (1-\alpha) v_y^2<0$$
and
$$ (v v_{xx} - (1-\alpha) v_x^2)(v v_{yy} - (1-\alpha) v_y^2) - (vv_{xy} - (1-\alpha)v_x v_y)^2 >0.$$
We compute the terms above separately using (\ref{vcomp}),
\begin{equation} \label{vvxx}
\begin{split}
\lefteqn{v v_{xx} - (1-\alpha) v_x^2} \\
= {} &  \bigg( (y-g)f+ (y-g)^2h \bigg) \bigg( -g''f - 2g'f' + 2(g')^2 h + (y-g)f''   \bigg)  \\{}&  - (1-\alpha) \bigg(-g'f + (y-g)f' - 2(y-g)g' h + (y-g)^2 h'\bigg)^2 \\{}& + (y-g)^2 O(|g'|, |g''|)+ O(|y-g|^3) \\
= {} &  - (1-\alpha) (g')^2 f^2 + (y-g) (-g''f^2 - 2\alpha f f'g' - 2(1-2\alpha) f (g')^2 h)  \\ {} & + (y-g)^2 \bigg( ff'' - (1-\alpha) (f')^2 + O(|g'|, |g''|)   \bigg), \\
{} & + O(|y-g|^3).
\end{split}
\end{equation}
Observe that the zero order term in $(y-g)$ is negative, as is the first order term $-(y-g)g''f^2$.  To deal with the term $-2\alpha(y-g) f f' g'$ we argue as follows. 
If $\alpha\neq 0$, define $$\eta = \frac{1-4\alpha^2}{1-\alpha^2} \in (0,1),$$
and use the inequality
$$-2\alpha(y-g)  ff'g' \le (1-\eta)(1-\alpha) (g')^2 f^2 + (y-g)^2 \frac{\alpha^2}{(1-\alpha)(1-\eta)} (f')^2,$$
to obtain on $S_{\ve}$, for $\ve>0$ sufficiently small,
\[
\begin{split}
v v_{xx} - (1-\alpha) v_x^2
\le {} &  - \eta (1-\alpha) (g')^2 f^2 -2 (y-g) (1-2\alpha) f (g')^2 h  \\ {} & + (y-g)^2 \bigg( ff'' - \left\{ (1-\alpha) - \frac{\alpha^2}{(1-\alpha)(1-\eta)} \right\} (f')^2 + O(|g'|, |g''|)   \bigg) \\
{} & + O(|y-g|^3) \\
\le {} & - \frac{\eta}{2} (1-\alpha) (g')^2 f^2  + (y-g)^2 \bigg( ff'' - \frac{2}{3} (1-2\alpha) (f')^2 + O(|g'|, |g''|)   \bigg), \\
{} & + O(|y-g|^3), \\
\end{split}
\]
where have absorbed the first order term in the zero and second order terms,  using the fact that $f\ge c>0$ and $h$ is bounded.  This inequality holds for $\alpha=0$ too, taking $\eta=1$.
By definitions of $f$ and $g$, there exists a small constant $\rho>0$ such that if $-\delta-\rho \le x \le \delta+\rho$ then 
$$ ff'' - \frac{2}{3} (1-2\alpha) (f')^2 =-1$$
and 
$g'$, $g''$ is sufficiently small so that the second order term is strictly negative.  Otherwise $(g')^2$ is uniformly positive and so the zero order term dominates.  In either case there exists a uniform $a>0$ such that on $S_{\ve}$,
\begin{equation} \label{concavity1}
v v_{xx} - (1-\alpha) v_x^2 \le - a(y-g)^2,
\end{equation}
 as long as $\ve>0$ is sufficiently small.
 
 Next we compute on $S_{\ve}$,
\begin{equation} \label{vvyy}
\begin{split}
v v_{yy} - (1-\alpha) v_y^2   
= {} &  ((y-g)f + (y-g)^2 h)2h \\
&- (1-\alpha)( f  + 2(y-g)h )^2  + O(|y-g|^2) \\
= {} &  - (1-\alpha) f^2 -2 (y-g)(1-2\alpha) fh + O(|y-g|^2) \\
\leq &-a<0, 
\end{split}
\end{equation}
for a uniform $a>0$ uniform as long as $\ve>0$ is sufficiently small, using the lower bound on $f$ of (\ref{lbf}).
 
 Next we compute
\[
\begin{split}
\lefteqn{v v_{xy} - (1-\alpha) v_x v_y} \\
= {} & \bigg( (y-g)f + (y-g)^2 h \bigg) \bigg( f'    -2g'h  +2(y-g)h' \bigg) \\ 
{} & - (1-\alpha) \bigg(-g'f + (y-g)f' - 2(y-g)g' h + (y-g)^2 h' \bigg) \bigg(f + 2(y-g)h  \bigg)  \\
& + (y-g)^2 O(|g'|)+ O(|y-g|^3) \\
= {} &   (1-\alpha)g' f^2  + (y-g)(\alpha ff' + 2(1-2\alpha) fg'h) \\
 {} & + (y-g)^2 ( -(1-2\alpha) hf' +(1+\alpha) fh' +O(|g'|)) + O(|y-g|^3).
\end{split}
\]
Combining with (\ref{vvxx}) and (\ref{vvyy}), we finally obtain
\begin{equation} \label{det}
\begin{split}
\lefteqn{(v v_{xx} - (1-\alpha) v_x^2)(v v_{yy} - (1-\alpha) v_y^2) - (vv_{xy} - (1-\alpha)v_x v_y)^2  } \\= {} &   (1-\alpha) (y-g) g''f^4    \\ {} & +(y-g)^2 \bigg( -(1-\alpha) f^2 (ff'' - (1-\alpha)(f')^2) - \alpha^2 f^2 (f')^2 + O(|g'|, |g''|)  \bigg) \\ {} & + O(|y-g|^3) \\ 
\ge {} &  a|y-g|^2,
\end{split}
\end{equation}
for some uniform $a>0$ uniform as long as $\ve>0$ is sufficiently small.  To see this inequality we argue as follows.   Note that the coefficient of $(y-g)$ is nonnegative.  There exists a small $\rho>0$ such that if $-\delta-\rho \le x \le \delta+\rho$ then 
$ ff'' - \frac{2}{3} (1-2\alpha) (f')^2 =-1$
and 
$g'$, $g''$ is sufficiently small so that for these values of $x$,
\[
\begin{split}
\lefteqn{-(1-\alpha) f^2 (ff'' - (1-\alpha)(f')^2) - \alpha^2 f^2 (f')^2+ O(|g'|, |g''|)} \\  
= {} & -(1-\alpha) f^2 ( ff'' - \frac{1-2\alpha}{1-\alpha}(f')^2 )+ O(|g'|, |g''|) \\
\ge {}  &  - (1-\alpha) f^2 (ff'' - \frac{2}{3} (1-2\alpha) (f')^2)+ O(|g'|, |g''|) \\
={} &  (1-\alpha) f^2  + O(|g'|, |g''|) >a>0,  
\end{split}
\]
using the fact that $1/(1-\alpha)\ge 1 \ge 2/3$.   For $x$ not in this range, $g''$ is uniformly positive and so the first order term in $(y-g)$ dominates.  This establishes (\ref{det}).

Combining (\ref{concavity1}), (\ref{vvyy}) and (\ref{det}) we see that $v$ is $\alpha$-concave on $S_{\ve}$ for $\ve>0$ sufficiently small.  Moreover, $v(x,y)$ agrees with the function $U(x,y)$ for $x$ in the complement of $[-1/2,1/2]$ and hence we can extend $v$  (by simply setting equal to $U(x, y)$ for $(x, y)\notin[-1,1]\times [-1,1]$) to give  a smooth $\alpha$-concave function,  still referred to as $v$, on the set
$$\Omega_0^{\ve} = \{ p\in \Omega_0 \ | \ \textrm{dist}(p, \partial \Omega_0) < \ve \}.$$
Moreover $v$ has nonvanishing derivative on the boundary $\partial \Omega_0$.
 Applying Lemma \ref{Ghomi} completes the proof of the theorem.
\end{proof}

\section{Convexity breaking} \label{sectionlast}

In this section we complete the proof of Theorem \ref{maintheorem}.

 Let $k \ge 2$ be given, and choose $N=N(k)$ sufficiently large as in  Theorem \ref{thmste}.
We take our initial data $u_0$ to be the function $v \in C^{\infty}(\ov{\Omega}_0)$ constructed in Theorem \ref{thmconstruction} for this given $N$ so that by Theorem \ref{thmste} we have a nondegenerate $C^k$ solution $(\Omega,u)$ of the Stefan problem with this initial data on $[0,T]$.  Let $p$ be the point $(0, 1/20) \in \mathbb{R}^2$ and consider the disc $D_{\delta/2}(p)$ of radius $\delta/2$ centered at $p$.  
 Shrinking $T$ if necessary, the solution $u$ has nonvanishing derivative on the boundary $\partial \Omega_t$. Hence
 by the Implicit Function Theorem, the free boundary $\partial \Omega_t \cap D_{\delta/2}(p)$ is given by a graph $y= w(x,t)$
for a locally defined $C^k$ function $w$.  Moreover, $w(x,0) = 1/20$ and by the Stefan boundary condition (\ref{sbc}) the function $w_t(x,0)$ coincides with $-v_y(x, 1/20)$ and so
by part (e) of Theorem \ref{thmconstruction}, 
$$x\mapsto w_t(x, 0)$$ is strictly negative and strongly concave.  Here, and in what follows, we may increase $k$ without loss of generality as necessary.

To show that $\Omega_t$ is not convex it is sufficient to show that for $t \in (0,T]$ the function $w(x,t)$ is not convex as a function of $x$.  This however is an immediate consequence of Taylor's Theorem which gives
$$w(x,t) = 1/20 + w_t(x,0)t + R(x,t)t$$
for $x \in [-\delta/2, \delta/2]$ and $t \in [0,T]$, where 
$$R(x,t) = \int_0^t w_{tt}(x,s) \frac{(t-s)}{t}ds.$$
As $t \rightarrow 0$, the remainder term $R(x,t)$ tends to  zero   in the $C^2$ norm with respect to $x$.  Hence, shrinking $T$ if necessary we obtain $w_{xx}(x,t) <0$ for $t\in (0,T]$ and in particular, $x\mapsto w(x,t)$ is not convex.

This  immediately implies that $u|_{\Omega_t}$ is not $\alpha$-concave  for $t\in (0,T]$.  Indeed, fix $t \in (0,T]$, write $u=u|_{\Omega_t}$ and consider the set $\{ u(x,y) = \ve \}$ for a small $\ve>0$, near the point $(0, 1/20)$.  Shrinking $\ve$ if necessary we may assume that $\nabla u$ does not vanish there and $\{ u= \ve \}$ is given locally by a $C^2$ graph $y=\rho(x)$ which has $\rho''(0)<0$.  Rotating and translating the coordinates we may assume that $\rho(0)=\rho'(0)=0$ and hence $u_x(0, 0)=0$.  Differentiating twice the equation $u(x, \rho(x))=\ve$ gives $u_{xx}(0, 0) = - u_y(0, 0) \rho''(0)>0$.  Hence $u u_{xx} - (1-\alpha) u_x^2>0$ at $(0,0)$ and so  $u$ is not $\alpha$-concave.

This completes the proof of Theorem \ref{maintheorem}.

\end{document}